\newcommand*{\DashedArrow}[1][]{\mathbin{\tikz [baseline=-0.25ex,-latex, dashed,#1] \draw [#1] (0pt,0.5ex) -- (1.3em,0.5ex);}}%
\newtheorem{theorem}{Theorem}[section]
\newtheorem{lemma}[theorem]{Lemma}
\newtheorem{example}[theorem]{Example}
\newtheorem{remark}[theorem]{Remark}
\newtheorem{corollary}[theorem]{Corollary}
\def\qed{\hfill\vbox{\hrule\hbox{\vrule\kern3pt\vbox{\kern6pt}\kern3pt\vrule}\hrule}\bigskip}
\newcommand{\dashedrightarrow}[1][2pt]{%
  \settowidth{\@tempdima}{$\longrightarrow$}\longrightarrow
  \makebox[-\@tempdima]{\hskip-1.5ex\color{white}\rule[0.5ex]{#1}{1pt}}
  \phantom{\longrightarrow}
}
\newcounter{mnotecounter}
\title{On the existence of birational surjective parametrizations of affine surfaces}
\author{J. Caravantes,  J.R. Sendra,  D. Sevilla and C. Villarino}
\begin{document}

\maketitle

\begin{abstract}
In this paper we show that not all affine rational complex surfaces can be parametrized birational and surjectively.
For this purpose, we prove that, if $S$ is an affine complex surface whose projective closure is smooth, a necessary condition for $S$ to admit a birational surjective parametrization from an open subset of the affine complex plane is that the infinity curve of $S$ must contain at least one rational component. As a consequence of this result we provide  examples of affine rational
surfaces that do not admit birational surjective parametrizations.
\end{abstract}

\noindent {\bf 2010 Mathematics Subject Classification.} Primary 14Q10, 68W30.

\noindent {\bf Keywords.} Rational surface, birational parametrization, surjective parametrization.

\section{Introduction}
Some  computational problems, of mathematical nature, can be approached by means of algebro-geometric techniques. In these situations, either because of the problem itself directly relates to an algebraic variety or because the problem is translated into an underlying algebraic variety, techniques from computational algebraic geometry are applied. Specially important are those cases where the associated algebraic variety is unirational, since then two different types of representations of the geometric object, namely a set of generators of its ideal or a rational parametrization of it, are available.  Examples of this claim appear in  some practical applications in computer geometric design (see \cite{Bajaj1994a}, \cite{FarinHoschekKim2002a}, \cite{HoschekLasser1993a}, \cite{Sederberg1998a}), where the connection to algebraic geometry is direct. Other examples can be found in the study and solution of algebraic differential equations by means of the analysis of an associated algebraic variety (see e.g.  \cite{FengGao2006}, \cite{Grasseger2014}, \cite{GrasseggerLastraSendraWinkler2016}, \cite{Hubert1996}, \cite{NgoSendraWinkler2015}, \cite{NgoWinkler2010}); for instance, an algebraic non-autonomous first order ordinary differential equation induces an algebraic surface and the existence, and actual computation, of a general rational solution is derived from a birational parametrization of this surface (see \cite{NgoWinkler2010}).

Nevertheless, when dealing with parametric representations one needs to guarantee that certain problematic situations do not appear. An specially important difficulty may occur when dealing with parametrizations that are not surjective. That is, let us work with, say, a rational affine variety $X$, and we take a birational affine  parametrization $f$ of $X$; in other words, a dominant birational map $f:\mathbb{C}^{r} \DashedArrow[->,densely dashed] f(\mathbb{C}^r) \subset X \subset \mathbb{C}^n$, and let us assume that $f$ is not surjective, i.e. $f(\mathbb{C}^r) \subsetneq X$. Then, the feasibility of the use of $f$ depends on whether the  variety property sought, or the information derived from the variety, is only readable from  the non-reachable zone $X\setminus f(\mathbb{C}^r)$ of the algebraic variety.  Example 1.1., in \cite{SendraSevillaVillarino2016a}, illustrates the described difficulty for the problem of computing the distance of a point to an algebraic surface.

When the affine complex variety $X$ is a curve, the problem admits a direct solution, in the sense that $X$ can always be
parametrized birationally and surjectively. Furthermore, in  \cite{AndradasRecio2007a} and \cite{Sendra2002a} one may find algorithms for this purpose. For the case of surfaces,
the problem turns to be more complicated. The question has been approached from two different point of views: either providing one surjective birational affine parametrization  of $X$ (see e.g. \cite{GaoChou1991a}, \cite{PerezSendraVillarino2010} , \cite{SendraSevillaVillarino2015b}), or determining  finitely many birational affine parametrizations $f_1,\ldots,f_s$ of $X$ such that the union of their imagines does cover the whole affine surface, that is $\cup_{i=1}^{s} f_i(\mathbb{C}^2)= X$ (see e.g.  \cite{BajajRoyappa1995}, \cite{GaoChou1991a}, \cite{SendraSevillaVillarino2014a}, \cite{SendraSevillaVillarino2015a}, \cite{SendraSevillaVillarino2016a}). Nevertheless, the following natural question arises: does there exist a surjective birational affine parametrization for every rational affine surface?

In this paper, we answer this question and we prove that, in general, the answer is no. More precisely, in Theorem \ref{main}, we describe the intersection of the projective closure of the given affine surface with the infinity plane under the assumption that the surface can be parametrized surjectively and birationally; in fact, we see that this intersection has to be either smooth or contain at least one rational component. As a consequence, in Example \ref{ex:FermatCubic}, we show that the Fermat cubic surface cannot be parametrized surjectively with a
birational parametrization.

\section{Preliminaries}

In this section, we recall some basic facts that will be used throughout the paper; we refer the reader to \cite{Beauville1996a}, \cite{Hartshorne1977a} for further details. We also include some consequences whose reference is unknown to us.
We work over the complex field $\mathbb{C}$. A variety is an irreducible and reduced projective scheme. We remind the reader that a variety $X$ is \emph{normal} iff the local ring $\mathcal{O}_{X,x}$ is integrally closed for all $x\in X$.  We also recall that any smooth variety is normal.

Some classic results:

\begin{theorem}\label{DimFibra}\cite[Exercise II.3.22(c)]{Hartshorne1977a}
Let $f:X\to Y$ be a surjective morphism of schemes. Then, the dimension of the general fiber of $f$ is $\dim X-\dim Y$.
\end{theorem}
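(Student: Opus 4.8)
To prove this, the plan is first to reduce it to a statement about finitely generated algebras. We work in the setting of the cited exercise, where $X$ and $Y$ are integral schemes of finite type over a field $k$ (so that $\dim X$, $\dim Y$ and ``the general fiber'' make sense), and we note that surjectivity of $f$ forces $f$ to be dominant, hence the generic fiber to be nonempty. Put $n=\dim X-\dim Y$. Choose a nonempty affine open $\operatorname{Spec} A\subseteq Y$ and a finite cover of $f^{-1}(\operatorname{Spec} A)$ by affine opens $\operatorname{Spec} B_1,\dots,\operatorname{Spec} B_N$. Since every nonempty open subset of the irreducible scheme $X$ contains its generic point, each $B_i$ is a finitely generated $A$-algebra which is a domain, the structure map $A\to B_i$ is injective, and $\dim B_i=\dim X$. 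Writing $K=\operatorname{Frac}(A)$, additivity of transcendence degree in the tower $k\subseteq K\subseteq\operatorname{Frac}(B_i)$ gives $\operatorname{trdeg}_K\operatorname{Frac}(B_i)=\dim X-\dim Y=n$; and since $B_i\otimes_A K$ is a localization of $B_i$, hence a finitely generated domain over $K$ with fraction field $\operatorname{Frac}(B_i)$, its Krull dimension equals $n$. This already computes the dimension of the generic fiber.

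The heart of the argument is a spreading-out step. Apply Noether normalization to the $n$-dimensional $K$-algebra $B_i\otimes_A K$ to obtain a module-finite inclusion $K[t_1,\dots,t_n]\hookrightarrow B_i\otimes_A K$. Clearing the finitely many denominators from $A\setminus\{0\}$ that occur in the $t_j$, in a set of module generators of $B_i\otimes_A K$ over $K[t_1,\dots,t_n]$, and in the integral equations satisfied over $K[t_1,\dots,t_n]$ by a finite set of $A$-algebra generators of $B_i$, one produces an element $0\ne a_i\in A$ such that $t_1,\dots,t_n\in (B_i)_{a_i}$, these elements are algebraically independent over $A_{a_i}$, and $A_{a_i}[t_1,\dots,t_n]\hookrightarrow (B_i)_{a_i}$ is injective and module-finite. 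Geometrically: over the nonempty open set $D(a_i)\subseteq Y$, the restriction of $f$ to $\operatorname{Spec}(B_i)_{a_i}$ factors through a morphism $\operatorname{Spec}(B_i)_{a_i}\to\mathbb{A}^n_{D(a_i)}$ which is finite (module-finiteness) and surjective (an injective module-finite, hence integral, ring map induces a surjection on spectra by the lying-over theorem). One could alternatively obtain a comparable open set by invoking generic flatness.

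Finally, set $V=\bigcap_i D(a_i)$, a nonempty open subset of $Y$. For $y\in V$ the fiber $X_y$ is covered by the schemes obtained from $\operatorname{Spec}(B_i)_{a_i}$ by base change along $\operatorname{Spec}\kappa(y)\to Y$, and each of these maps onto $\mathbb{A}^n_{\kappa(y)}$ by a morphism which is still finite and still surjective, both properties being stable under base change. Since a finite surjective morphism of schemes preserves Krull dimension, each piece has dimension $n$, whence $\dim X_y=n$; thus the general fiber of $f$ has dimension $n=\dim X-\dim Y$. The step I expect to be the main obstacle is precisely this spreading out: one must check that module-finiteness over the polynomial subring and the injectivity needed for surjectivity on spectra descend simultaneously to a single Zariski-dense open of $Y$, and then survive the base change to an arbitrary point of $V$, where the fiber ring need no longer be reduced, so the dimension bounds have to be read off after passing to a suitable irreducible component. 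The remaining ingredients --- Noether normalization, lying-over, and invariance of dimension under finite surjective morphisms --- are standard.
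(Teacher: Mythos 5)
The paper offers no proof of this statement---it is quoted as a classic result with a citation to Hartshorne, Exercise II.3.22(c)---so there is nothing internal to compare against; your argument is the standard one that that exercise intends (reduce to affines, compute the generic fibre dimension by additivity of transcendence degree, Noether-normalize the generic fibre, spread out to a dense open $D(a_i)\subseteq Y$ over which each affine piece is finite and surjective onto $\mathbb{A}^n$, and conclude by base change plus invariance of dimension under finite surjections). I find no gap: the spreading-out step you flag as the main obstacle is handled correctly, since integrality of a finite set of algebra generators over $A_a[t_1,\dots,t_n]$ gives module-finiteness of $(B_i)_a$, injectivity follows from algebraic independence of the $t_j$ over $K$, and lying-over then yields surjectivity, all of which survive base change to $\kappa(y)$.
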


\begin{theorem}\label{IndetAfin}\cite[Exercise I.3.20]{Hartshorne1977a}
Let $X$ be a quasi projective normal surface. Let
$f:X\DashedArrow[->,densely dashed]\mathbb{A}^N$ be a rational map,
whose indeterminacy locus is finite. Then $f$ is a regular morphism.
\end{theorem}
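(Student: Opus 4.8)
My plan is to reduce the assertion to a statement about the $N$ coordinate functions of $f$ and then to exploit normality of $X$. Write $f=(f_{1},\dots ,f_{N})$ with $f_{i}\in\mathbb{C}(X)$. Since giving a morphism from an open subset of $X$ to $\mathbb{A}^{N}$ is the same as giving an $N$-tuple of regular functions on that open set, $f$ extends to a morphism on a neighbourhood of a point $x\in X$ if and only if every $f_{i}$ belongs to the local ring $\mathcal{O}_{X,x}$. Hence the indeterminacy locus of $f$ is exactly $\bigcup_{i=1}^{N}Z_{i}$, where $Z_{i}=\{x\in X\mid f_{i}\notin\mathcal{O}_{X,x}\}$ is the closed set of points at which $f_{i}$ is not regular. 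By hypothesis this union is finite, so each $Z_{i}$ is a finite set.

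The crucial step is to show that each $Z_{i}$ is empty; this is where normality is used. On a normal variety, a rational function has a well-defined divisor, and its locus of non-regularity is precisely the support of its polar divisor, hence is empty or of pure codimension one. (Algebraically: in an affine chart $\mathrm{Spec}\,A$ one has $A=\bigcap_{\mathrm{ht}\,\mathfrak{p}=1}A_{\mathfrak{p}}$ because $A$ is a normal Noetherian domain, and likewise every localization $\mathcal{O}_{X,x}$ is the intersection of its height-one localizations, so $f_{i}$ can fail to be regular at $x$ only by having a pole along a prime divisor through $x$.) Equivalently one may invoke the algebraic Hartogs principle: a rational function on a normal variety that is regular outside a closed set of codimension at least two is regular everywhere. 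Since $X$ is a surface, $Z_{i}$ being finite means it has codimension two, so in either formulation $Z_{i}=\emptyset$, i.e. $f_{i}\in\mathcal{O}_{X}(X)$.

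Once all the $f_{i}$ are global regular functions, $f=(f_{1},\dots ,f_{N})\colon X\to\mathbb{A}^{N}$ is a morphism, which is exactly the conclusion. The only genuine obstacle is the middle step, namely the fact that the polar locus of a rational function on a normal variety has pure codimension one; this is a classical consequence of normality (Serre's criterion $R_{1}+S_{2}$, or the description of a normal domain as the intersection of its height-one localizations), so no new idea is needed. I would also point out that quasi-projectivity enters only to provide affine charts, and that dimension two is essential in the final comparison: the statement is really the local algebraic fact that a normal local ring of dimension at least two equals the intersection of its height-one localizations, applied to each $f_{i}$.
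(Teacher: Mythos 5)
Your proposal is correct and is precisely the standard argument behind the paper's citation (the paper gives no proof of its own, referring instead to \cite[Exercise I.3.20]{Hartshorne1977a}): reduce to the $N$ coordinate functions, note that the indeterminacy locus is the union of their non-regularity loci, and use that on a normal variety the non-regularity locus of a rational function is the support of its polar divisor, hence empty or of pure codimension one, which is incompatible with being a nonempty finite subset of a surface. No gaps; the appeal to $A=\bigcap_{\mathrm{ht}\,\mathfrak{p}=1}A_{\mathfrak{p}}$ for a normal Noetherian domain (equivalently, algebraic Hartogs) is exactly the content of the cited exercise.
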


\begin{theorem}\label{FundLoc}\cite[Lemma V.5.1]{Hartshorne1977a}
Let $f:X\DashedArrow[->,densely dashed]\mathbb{P}^N$ be a birational map. If $X$ is normal, the fundamental locus of $f$ has codimension at least 2 in $X$.
\end{theorem}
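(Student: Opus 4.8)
The plan is to argue by contradiction, exploiting the hypothesis of normality only at the codimension-one points of $X$. Recall that the fundamental (indeterminacy) locus $F$ of a rational map to projective space is a proper closed subset of $X$, since the locus where such a map is represented by a morphism is open and nonempty. So it suffices to show that $F$ contains no prime divisor. Assume, for contradiction, that some irreducible component $Z\subseteq F$ has codimension $1$ in $X$, and let $\eta$ be the generic point of $Z$. Because $X$ is normal and $Z$ has codimension one, $\mathcal{O}_{X,\eta}$ is a Noetherian, integrally closed, one-dimensional local domain, hence a discrete valuation ring; write $v$ for the associated valuation on the function field $K(X)$.

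Now pick a representation $f=[f_0:\cdots:f_N]$ with $f_i\in K(X)$ not all zero, and set $n=\min_{0\le i\le N} v(f_i)$, attained at some index $j$. Choose $u\in K(X)$ with $v(u)=-n$ (for instance a suitable power of a uniformizer of $\mathcal{O}_{X,\eta}$), and put $g_i=u\,f_i$. Then $v(g_i)\ge 0$ for every $i$, so each $g_i$ lies in $\mathcal{O}_{X,\eta}$, while $v(g_j)=0$, so $g_j$ is a unit there. Hence there is an open neighbourhood $U$ of $\eta$ on which all $g_i$ are regular and $g_j$ is nowhere zero, and $[g_0:\cdots:g_N]$ defines a morphism $U\to\mathbb{P}^N$. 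Since $[g_0:\cdots:g_N]$ and $[f_0:\cdots:f_N]$ agree as rational maps (they differ by the common scalar $u$), the map $f$ is represented by a morphism on $U$. But $\eta\in U$ forces $U\cap Z$ to be a nonempty (indeed dense) open subset of $Z$, contradicting $Z\subseteq F$. Therefore no component of $F$ has codimension one, i.e.\ $\operatorname{codim}_X F\ge 2$.

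I expect the only delicate point to be the passage from the generic point back to a genuine open set: namely, turning the statement ``each $g_i\in\mathcal{O}_{X,\eta}$ and $g_j$ a unit'' into an honest morphism defined on an open $U$ that actually meets $Z$, and making sure this argument uses nothing beyond normality of $X$ in codimension one. Note that birationality of $f$ plays no role in the conclusion; it is kept in the hypothesis only because this is the form in which the lemma is invoked afterwards — for example, applied to a surface it shows that the fundamental locus of a birational map is a finite set of points.
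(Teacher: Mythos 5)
The paper does not prove this statement --- it is quoted directly from Hartshorne, Lemma V.5.1 --- so the only comparison available is with the standard argument, and yours is exactly that argument, carried out correctly: normality makes $\mathcal{O}_{X,\eta}$ a DVR at the generic point $\eta$ of a hypothetical codimension-one component of the fundamental locus, and multiplying a representative $[f_0:\cdots:f_N]$ by a suitable power of the uniformizer produces coordinates that are regular near $\eta$ with one of them a unit, extending $f$ to a morphism on an open set meeting that component. Your closing remarks are also accurate: the delicate step (passing from the local ring at $\eta$ to an honest open neighbourhood, which exists because regularity and nonvanishing are open conditions) is handled, and birationality of $f$ plays no role in the proof, only in how the lemma is used later in the paper to conclude that $F(\overline{f})$ is a finite set of points on a surface.
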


\begin{theorem}\label{BlowUp}\cite[Theorem II.7]{Beauville1996a}
Let X be a surface. Let $f:X\DashedArrow[->,densely dashed]\mathbb{P}^N$ be a rational map. Then there exists a commutative diagram
\[ \xy
	(0,12)*+{Y}="Y";
	(-15,0)*++{X}="X";
	(15,0)*++{\mathbb{P}^N}="PN";
	{\ar_{\displaystyle g} "Y"; "X"};
	{\ar^{\displaystyle h} "Y"; "PN"};
	{\ar@{-->}^{\displaystyle f} "X"; "PN"};
\endxy \]
where $g$ is a composite of blowups and $h$ is a morphism.
\end{theorem}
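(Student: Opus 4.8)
\emph{Proof proposal.}
The plan is to realise $f$ by a linear system on $X$, to clear its base points one at a time by blowing them up, and to make the process terminate by tracking a numerical invariant that strictly decreases. (Here, as throughout this type of argument, $X$ is taken to be a smooth projective surface, so that blow-ups of points and intersection numbers behave well.) First I dispose of the trivial case: if $f$ is constant it is already a morphism, and we may take $Y=X$. So assume $f$ non-constant. Then $f$ is given in homogeneous coordinates by $[f_0:\cdots:f_N]$ with the $f_i$ rational functions on $X$; after clearing denominators and removing a greatest common divisor, the $f_i$ become global sections of a line bundle $L$ on $X$ and span a linear system $\Lambda$, of projective dimension $r\le N$ and without fixed part, such that $f$ is the composition of the associated rational map $\varphi_\Lambda:X\dashrightarrow\mathbb{P}^{r}$ with a linear embedding $\mathbb{P}^{r}\hookrightarrow\mathbb{P}^N$. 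Since $X$ is a smooth (hence normal) surface and $\Lambda$ has no fixed component, its base locus is a finite set of points.

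For the inductive step: if the base locus of $\Lambda$ is empty, then $\Lambda$ is base-point-free, so $\varphi_\Lambda$, and therefore $f$, is a morphism and we are done. Otherwise choose a base point $p$, of multiplicity $m=m_p(\Lambda)\ge 1$, and let $\pi:X'\to X$ be the blow-up at $p$, with exceptional curve $E$. Then $E$ occurs in $\pi^*\Lambda$ as a fixed component of multiplicity exactly $m$; removing it yields a linear system $\Lambda'=\pi^*\Lambda-mE$ on the smooth surface $X'$, again of projective dimension $r$ and without fixed part, with $\varphi_{\Lambda'}=\varphi_\Lambda\circ\pi$. Thus resolving $f$ reduces to resolving $\varphi_{\Lambda'}$, and iterating produces a tower of blow-ups $X_k\to\cdots\to X_1\to X$ together with linear systems $\Lambda_k$.

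To show the tower is finite I track the self-intersection $\delta(\Lambda):=D^2$ of a general member $D$. Three facts close the induction: $\delta(\Lambda)\ge 0$, since two general members of a fixed-part-free linear system on a surface share no component and therefore meet in a zero-dimensional (possibly empty) scheme; $\delta(\Lambda')=(\pi^*D-mE)^2=D^2-m^2\le\delta(\Lambda)-1$, so the invariant strictly decreases at each blow-up; and $\delta(\Lambda)=0$ forces the base locus to be empty, because otherwise two general members both pass through a base point yet have intersection number $0$. As $\delta$ is a non-negative integer, after finitely many blow-ups $\pi_1,\dots,\pi_k$ we reach $\delta(\Lambda_k)=0$, hence a base-point-free system, hence a morphism $h:Y:=X_k\to\mathbb{P}^N$; together with $g:=\pi_1\circ\cdots\circ\pi_k$ this gives the asserted commutative diagram.

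I expect the main obstacle to be the local bookkeeping behind these three facts: one has to check that at each stage the system can be taken without fixed part, that the absence of a fixed part on a smooth surface really forces the base locus to be $0$-dimensional (so that blowing up a point is the correct move and $m$ is a well-defined positive integer), and that general members of such a system share no component — the last point being what underlies both $\delta\ge 0$ and the termination criterion. All of this is routine intersection theory on smooth surfaces together with standard properties of blow-ups; a slightly slicker alternative would be to invoke Theorems \ref{FundLoc} and \ref{IndetAfin} to control indeterminacy loci and to recognise when the resolved rational map is already regular, but the numerical descent via $\delta$ is what actually forces the induction to stop.
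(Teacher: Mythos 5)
Your proof is correct and is essentially the argument the paper relies on: the statement is quoted without proof from Beauville's Theorem II.7, whose proof is exactly your descent on the self-intersection $D^2-m^2$ of the mobile part of the defining linear system under blow-ups of base points. The only cosmetic imprecision is in the final step --- the base locus may become empty before $\delta$ reaches $0$, so one should conclude termination from the fact that $\delta$ is a non-negative integer strictly decreasing at each blow-up rather than by ``reaching $\delta=0$'' --- but this does not affect the validity of the argument.
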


\begin{corollary}\label{ImDim1}
In the hypotheses of Theorem \ref{BlowUp}, suppose that $X$ is normal. Then, for any fundamental point $P$ of $f$, $h(g^{-1}(P))$ is a connected finite union of rational curves.
\end{corollary}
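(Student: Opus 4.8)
The plan is to study the fiber $g^{-1}(P)\subseteq Y$ and then transport it to $\mathbb{P}^N$ through the morphism $h$. Since $g$ is a composite of blowups it is a proper birational morphism of surfaces; because $X$ is normal, the fibers of $g$ are connected (Zariski's main theorem, see \cite{Hartshorne1977a}), and, $g$ being a composite of blowups, its exceptional locus is a finite union of rational curves, so that over every point the fiber of $g$ is a connected union of finitely many rational curves. The first thing to check is that $g^{-1}(P)$ is one-dimensional. Indeed, $g$ is an isomorphism over the complement of the finite set of images of the blowup centers, so over that set $f=h\circ g^{-1}$ is a morphism; hence every fundamental point of $f$, in particular $P$, lies in that finite set, and then $g^{-1}(P)$ contains the strict transform of the exceptional divisor of the corresponding blowup, so $\dim g^{-1}(P)=1$. (Equivalently: if $g^{-1}(P)$ were finite it would be a single point by connectedness, $g$ would be an isomorphism near it, and $f$ would be a morphism near $P$, contradicting that $P$ is fundamental.) In particular $g^{-1}(P)$ is a connected complete curve, each of whose components is rational, being contracted by $g$ and so contained in the exceptional locus of the composite of blowups.

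Next I would apply $h$. As $h$ is a morphism and $g^{-1}(P)$ is complete and connected, $h(g^{-1}(P))$ is a closed connected subset of $\mathbb{P}^N$ with finitely many irreducible components, and each component is the image of a rational curve, hence either a point or a rational curve: the image of a rational curve $C$ under a non-constant morphism carries a surjection from $\mathbb{P}^1$ (precompose with the normalization $\mathbb{P}^1\to C$), so by L\"uroth's theorem it is rational. It then only remains to exclude the degenerate possibility that $h$ collapses all of $g^{-1}(P)$ to a single point.

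For this I would return to the graph of $f$. Let $\overline{\Gamma}\subseteq X\times\mathbb{P}^N$ be the closure of the graph of $f$ over its domain of definition. Since $(g,h)$ is birational onto its image and $Y$ is complete, $(g,h)(Y)=\overline{\Gamma}$, so the fiber of the first projection $p_1\colon\overline{\Gamma}\to X$ over $P$ is exactly $\{P\}\times h(g^{-1}(P))$. Now $p_1$ is a proper birational morphism and $X$ is normal, so by Zariski's main theorem $p_1$ is an isomorphism over the open locus where its fibers are finite, and hence $f$ extends to a morphism near any point over which $p_1$ has finite fiber. As $P$ is a fundamental point of $f$, the fiber $\{P\}\times h(g^{-1}(P))$ cannot be finite; therefore $h(g^{-1}(P))$ is one-dimensional. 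A connected one-dimensional set has no isolated points, so none of the irreducible components of $h(g^{-1}(P))$ is a point, and by the previous paragraph each of them is a rational curve; since there are finitely many, this proves the statement.

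The step I expect to be the real obstacle is this last one: a priori $h$ could contract the whole exceptional fiber $g^{-1}(P)$, and excluding this when $P$ is a fundamental point of $f$ forces one back to the graph of $f$ and makes essential use of the normality of $X$ through Zariski's main theorem (equivalently, the valuative criterion of properness). The remaining ingredients — the connectedness and rationality of the exceptional fibers of a composite of blowups, and the fact that a morphic image of a rational curve is a point or a rational curve — are standard.
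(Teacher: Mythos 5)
Your proof is correct, and its overall shape matches the paper's: both arguments reduce to showing that $h$ cannot contract the whole fiber $g^{-1}(P)$ (a connected union of rational curves, since $g$ is a composite of blowups) to a single point, by deriving a contradiction with $P$ being a fundamental point of $f$. Where you differ is in how that contradiction is obtained. The paper stays with $f$ itself: if $h(g^{-1}(P))$ were a point $Q$, it picks an affine neighbourhood $U$ of $Q$, observes that $V=f^{-1}(U)\cup\{P\}$ is a neighbourhood of $P$, and applies Theorem \ref{IndetAfin} (a rational map from a normal surface to affine space with finite indeterminacy locus is regular) to extend $f$ across $P$. You instead pass to the closure of the graph $\overline{\Gamma}\subset X\times\mathbb{P}^N$, identify $p_1^{-1}(P)$ with $\{P\}\times h(g^{-1}(P))$, and invoke the form of Zariski's Main Theorem asserting that a proper birational morphism onto a normal variety is an isomorphism over the locus of finite fibers. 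That is a perfectly standard and correct route, but note it uses a strictly stronger version of ZMT than the one the paper actually states (Theorem \ref{ZMT} only gives connectedness of fibers); with only connectedness in hand you would know $p_1^{-1}(P)$ is a single point but would still need an extra step --- essentially the paper's Theorem \ref{IndetAfin} argument, or the local isomorphism form of ZMT --- to conclude that $f$ extends. So your proof is sound as written, provided you are willing to cite that stronger statement; the paper's choice of Theorem \ref{IndetAfin} is precisely the device that lets it avoid doing so.
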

\begin{proof}
Let $E$ be the connected component $g^{-1}(P)$ of the exceptional divisor. Since $g$ is a composite of blowups, $E$ is a connected union of irreducible curves, all of them isomorphic to $\mathbb{P}^1$.

If $h(g^{-1}(P))=h(E)$ is not a connected finite union of rational curves, due to the connectedness of $E$ and the fact that it is 1-dimensional, $h(E)$ must be a single point $Q\in \overline{f(X)}\subset\mathbb{P}^N$. Taking an affine neighbourhood $U\subset\mathbb{P}^N$ of $Q$, we have that $V=f^{-1}(U)\cup\{P\}$ is neighbourhood of $P$ in $X$. Applying Theorem \ref{IndetAfin} to $f|_{V}$, we have that $f$ can be extended to $P$, which contradicts the fact that it is a fundamental point.
\end{proof}

\begin{theorem}\label{Castelnuovo}\cite[Corollary V.5.4 and Theorem V.5.7]{Hartshorne1977a}(Castelnuovo's criterion of contractibility)
Let $X$ be a smooth surface and $C$ an irreducible curve in $X$. There exists a smooth surface $Y$ and a birational morphism $f:X\to Y$ contracting $C$ to a point iff $C\simeq \mathbb{P}^1$ and $C^2=-1$. In such case, $f$ is the blowup of the point $f(C)\in Y$.
\end{theorem}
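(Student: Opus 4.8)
The plan is to prove the two implications separately: the "only if" part is short, the "if" part is the substantial one, and I will arrange things so that the easy direction also yields the final clause (that the contraction is a blowup).

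\emph{The easy direction.} Suppose $f:X\to Y$ is a birational morphism of smooth projective surfaces whose exceptional locus is exactly the irreducible curve $C$, i.e. $f$ restricts to an isomorphism $X\setminus C\xrightarrow{\ \sim\ }Y\setminus\{f(C)\}$. I would invoke the structure theorem for birational morphisms between smooth projective surfaces: $f$ factors as a finite composition of point blowups $X=X_n\to X_{n-1}\to\cdots\to X_0=Y$. Its exceptional divisor then contains at least one irreducible curve arising from each blowup, so irreducibility of $C$ forces $n=1$; hence $f$ is the blowup of $f(C)$ and $C$ is its exceptional curve, whence $C\cong\mathbb{P}^1$ and $C^2=-1$. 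This argument will be reused at the very end.

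\emph{The hard direction: constructing the contraction.} Now assume $C\cong\mathbb{P}^1$ and $C^2=-1$. I would fix a very ample divisor $H$ on $X$, enlarged to a multiple so that also $H^1(X,\mathcal{O}_X(H))=0$ (Serre vanishing), set $k=H\cdot C\ge 1$ and $M=H+kC$, so that $M\cdot C=0$. The goal is to show that $|M|$ (or $|mM|$ for $m\gg0$ if needed) is base-point free, is a closed embedding on $X\setminus C$, and contracts $C$ to a single point; then the Stein factorization $X\to Y$ of the associated morphism, which satisfies $f_*\mathcal{O}_X=\mathcal{O}_Y$, is an isomorphism over $Y\setminus\{p\}$ (where $p$ is the image of $C$) with $f^{-1}(p)=C$. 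The cohomological input is the vanishing
\[
H^1\!\bigl(X,\mathcal{O}_X(H+iC)\bigr)=0\qquad(0\le i\le k),
\]
proved by induction on $i$ from $0\to\mathcal{O}_X(H+(i-1)C)\to\mathcal{O}_X(H+iC)\to\mathcal{O}_C\bigl((H+iC)|_C\bigr)\to0$, since $(H+iC)|_C$ has degree $k-i\ge-1$ on $C\cong\mathbb{P}^1$ and so has vanishing $H^1$, the base case being $H^1(X,\mathcal{O}_X(H))=0$. Feeding $i=k-1,k$ into $0\to\mathcal{O}_X(M-C)\to\mathcal{O}_X(M)\to\mathcal{O}_C(M|_C)\to0$ with $M|_C\cong\mathcal{O}_{\mathbb{P}^1}$ shows $H^0(X,\mathcal{O}_X(M))\twoheadrightarrow H^0(C,\mathcal{O}_C)=\mathbb{C}$. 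Combined with very ampleness of $H$ (sections of $\mathcal{O}_X(H)$, multiplied by the tautological section of $\mathcal{O}_X(kC)$, sit inside $H^0(\mathcal{O}_X(M))$ and already separate points and tangent vectors off $C$), this yields: no base points, an embedding away from $C$, a section nowhere zero on $C$ (a lift of a nonzero constant) so no base points on $C$ either, and every section constant along $C$ (as $M|_C$ is trivial) so $C$ is collapsed to one point.

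\emph{The main obstacle: smoothness of $Y$.} The delicate step is proving $Y$ smooth; since $f$ is an isomorphism off $p$ and $X$ is smooth, only $p$ is in question, and this is where $C^2=-1$ (rather than merely $C^2<0$) is used. I would apply the theorem on formal functions to $f$: using $f_*\mathcal{O}_X=\mathcal{O}_Y$, the completed local ring $\widehat{\mathcal{O}}_{Y,p}$ is $\varprojlim_n H^0\bigl(X,\mathcal{O}_X/\mathcal{I}_C^{\,n}\bigr)$. The filtration of $\mathcal{O}_X/\mathcal{I}_C^{\,n}$ has graded pieces $\mathcal{I}_C^{\,j-1}/\mathcal{I}_C^{\,j}\cong\mathrm{Sym}^{\,j-1}\!\bigl(N_{C/X}^{\vee}\bigr)$, and $N_{C/X}=\mathcal{O}_C(C)=\mathcal{O}_{\mathbb{P}^1}(C^2)=\mathcal{O}_{\mathbb{P}^1}(-1)$, so these pieces are $\mathcal{O}_{\mathbb{P}^1}(j-1)$, with $h^1=0$ and $h^0=j$. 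Peeling them off gives $\dim_{\mathbb{C}}H^0(\mathcal{O}_X/\mathcal{I}_C^{\,n})=1+2+\cdots+n=\binom{n+1}{2}$; in particular the cotangent space $\mathfrak{m}_p/\mathfrak{m}_p^2$ has dimension $3-1=2$. Since $\dim\mathcal{O}_{Y,p}=\dim Y=2$, this makes $\mathcal{O}_{Y,p}$ a regular local ring, so $Y$ is smooth. The crux is exactly the numerical coincidence $\binom{n+1}{2}=\dim\mathbb{C}[x,y]/(x,y)^n$, which pins the embedding dimension at $p$ to $2$; for $C^2\le-2$ the same recipe produces a strictly larger cotangent space, i.e. an honest surface singularity. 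Finally, $f:X\to Y$ is now a birational morphism of smooth surfaces contracting only $C$, so by the easy direction it is the blowup of $p=f(C)$, completing the proof. Once the vanishing theorem is in hand, the base-point-freeness and separation statements for $|M|$ should be routine; the genuinely careful part is the formal-functions computation together with the bookkeeping ensuring $f_*\mathcal{O}_X=\mathcal{O}_Y$ and that $\mathcal{I}_C^{\,n}$ is indeed the correct ideal to read off $\widehat{\mathcal{O}}_{Y,p}$.
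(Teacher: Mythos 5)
This theorem is quoted by the paper from Hartshorne (Corollary V.5.4 and Theorem V.5.7) and is not proved there, so there is no in-paper argument to compare against; your proposal is essentially a correct reconstruction of Hartshorne's own proof of Theorem V.5.7, including the divisor $M=H+kC$, the inductive vanishing of $H^1(X,\mathcal{O}_X(H+iC))$, and the formal-functions computation of $\widehat{\mathcal{O}}_{Y,p}$, together with the converse via the factorization of birational morphisms into point blowups. Two points deserve explicit mention. First, in the ``only if'' direction you (correctly) read ``contracting $C$ to a point'' as ``$f$ is an isomorphism off $C$''; under the weaker literal reading the converse is false, since after two successive blowups the strict transform of the first exceptional curve is contracted to a point yet has self-intersection $-2$, so that reading should be stated. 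Second, the assertion that the count $\dim H^0(\mathcal{O}_X/\mathcal{I}_C^{\,n})=\binom{n+1}{2}$ by itself ``pins the embedding dimension to $2$'' is not purely numerical: to identify $\widehat{\mathfrak{m}}_p/\widehat{\mathfrak{m}}_p^{\,2}$ with $H^0(\mathcal{I}_C/\mathcal{I}_C^{\,2})$ one must know that $\widehat{\mathfrak{m}}_p^{\,2}$ fills the kernel of $\widehat{\mathcal{O}}_{Y,p}\to H^0(\mathcal{O}_X/\mathcal{I}_C^{\,2})$, which requires surjectivity of the multiplication maps $\mathrm{Sym}^{\,j}H^0(N^{\vee})\to H^0(\mathrm{Sym}^{\,j}N^{\vee})$, that is, of $\mathrm{Sym}^{\,j}H^0(\mathcal{O}_{\mathbb{P}^1}(1))\to H^0(\mathcal{O}_{\mathbb{P}^1}(j))$. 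This holds precisely because $C\simeq\mathbb{P}^1$ and $N^{\vee}\simeq\mathcal{O}_{\mathbb{P}^1}(1)$, and it is exactly how Hartshorne exhibits $\widehat{\mathcal{O}}_{Y,p}\simeq\mathbb{C}[[x,y]]$; with that supplement your dimension count does establish regularity, and the argument is complete.
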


\begin{remark}\label{ClasifBirat}
Theorem \ref{Castelnuovo} says that any birational morphism between nonsingular surfaces is a composite of blowups, each one being the blowup of a closed point.
\end{remark}



\begin{theorem}\label{ZMT}(Zariski's Main Theorem, see e.g. \cite[Corollary III.11.4]{Hartshorne1977a})
Let $f:X\to Y$ be a birational projective morphism between irreducible and reduced varieties. Suppose $Y$ to be normal. Then, for any $y\in Y$, $f^{-1}(y)$ is connected.
\end{theorem}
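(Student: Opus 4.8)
The plan is to derive the statement from the theorem on formal functions, the decisive input being that normality of $Y$ forces $f_{*}\mathcal{O}_{X}=\mathcal{O}_{Y}$. Throughout I use that a projective morphism is proper, so its direct images are coherent and the theorem on formal functions applies; note also that $f$, being proper and dominant (a birational morphism is dominant), is surjective, so every fiber is nonempty, and that $X$ is connected because it is irreducible.

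First I would establish $f_{*}\mathcal{O}_{X}=\mathcal{O}_{Y}$. Since $f$ is projective, $\mathcal{A}:=f_{*}\mathcal{O}_{X}$ is a coherent sheaf of $\mathcal{O}_{Y}$-algebras; in particular it is a finite $\mathcal{O}_{Y}$-module, hence integral over $\mathcal{O}_{Y}$. For a nonempty affine open $V\subseteq Y$, the ring $\mathcal{A}(V)=\mathcal{O}_{X}(f^{-1}(V))$ is a subring of the function field $K(X)$, because $f^{-1}(V)$ is a nonempty open subset of the integral scheme $X$; and $K(X)=K(Y)$ since $f$ is birational. Thus $\mathcal{O}_{Y}(V)\subseteq\mathcal{A}(V)$ is an extension inside $K(Y)$ by elements integral over $\mathcal{O}_{Y}(V)$, and normality of $Y$ (i.e. $\mathcal{O}_{Y}(V)$ integrally closed in $K(Y)$) forces $\mathcal{A}(V)=\mathcal{O}_{Y}(V)$. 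Hence $\mathcal{A}=\mathcal{O}_{Y}$.

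Now fix $y\in Y$ and, for $n\geq 0$, let $X_{n}$ be the closed subscheme of $X$ defined by $\mathfrak{m}_{y}^{n+1}\mathcal{O}_{X}$, so that $X_{n}$ has underlying topological space the fiber $f^{-1}(y)$. The theorem on formal functions together with the previous step gives an isomorphism of rings
\[
\widehat{\mathcal{O}}_{Y,y}\;\cong\;(f_{*}\mathcal{O}_{X})^{\wedge}_{y}\;\cong\;\varprojlim_{n}H^{0}(X_{n},\mathcal{O}_{X_{n}}),
\]
and $\widehat{\mathcal{O}}_{Y,y}$ is a local ring, hence has no idempotents other than $0$ and $1$. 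Suppose, for contradiction, that $f^{-1}(y)$ is disconnected, say $f^{-1}(y)=Z_{1}\sqcup Z_{2}$ with $Z_{1},Z_{2}$ nonempty and clopen. Then each $X_{n}$ is, as a scheme, the disjoint union of the open subschemes supported on $Z_{1}$ and $Z_{2}$, so $H^{0}(X_{n},\mathcal{O}_{X_{n}})$ splits as a product $B_{n}\times C_{n}$ with $B_{n},C_{n}\neq 0$ (a nonempty scheme has nonzero global sections), and the transition maps respect this product. Passing to the inverse limit, $\varprojlim_{n}H^{0}(X_{n},\mathcal{O}_{X_{n}})\cong(\varprojlim_{n}B_{n})\times(\varprojlim_{n}C_{n})$, and both factors are nonzero because each contains the compatible family of unit elements; hence this ring contains the nontrivial idempotent $(1,0)$, contradicting the locality of $\widehat{\mathcal{O}}_{Y,y}$. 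Therefore $f^{-1}(y)$ is connected.

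The step I expect to be the main obstacle is the equality $f_{*}\mathcal{O}_{X}=\mathcal{O}_{Y}$: one must be careful that $\mathcal{A}(V)$ genuinely sits inside $K(Y)$ — which is exactly where irreducibility of $X$ and birationality of $f$ enter — and that coherence of $f_{*}\mathcal{O}_{X}$ really yields integrality over $\mathcal{O}_{Y}$. As an alternative to the formal-functions argument, one can package the same idea through the Stein factorization $f=g\circ f'$, with $f'$ having connected fibers and $g$ finite: then $g$ is finite and birational onto the normal variety $Y$, hence an isomorphism by the argument of the second paragraph, so $f=f'$ already has connected fibers. This trades the use of the theorem on formal functions for the existence and basic properties of the Stein factorization.
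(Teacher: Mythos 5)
Your argument is correct: it is precisely the proof of the cited result (Hartshorne, Cor.\ III.11.4), namely the reduction to $f_{*}\mathcal{O}_{X}=\mathcal{O}_{Y}$ via finiteness, birationality and normality, followed by the theorem on formal functions to exclude a nontrivial idempotent in the local ring $\widehat{\mathcal{O}}_{Y,y}$. The paper itself offers no proof but only the reference, so there is nothing to contrast; your write-up (including the Stein-factorization remark) matches the standard treatment.
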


\section{Surjective parametrizations of affine surfaces}

This section is devoted to proving the following result. 

\begin{theorem}\label{main}
Let $f:\mathbb{C}^2\DashedArrow[->,densely dashed] \mathbb{C}^N$ be a rational map. Let $S$ be the Zariski closure of $f(\mathbb{C}^2)$ in $\mathbb{C}^N$, and suppose that $f$ is birational and surjective onto $S$. Let $\overline{S}$ be the Zariski closure of $S$ in $\mathbb{P}^N$ and $S_\infty=\overline{S}-S$ the infinite hyperplane section.
If $\overline{S}$ is smooth, then $S_\infty$ has at least one rational component.
\end{theorem}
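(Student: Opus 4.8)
The plan is to compactify the source map and analyze where the "missing" infinity has to go. Start with $f:\mathbb{C}^2 \DashedArrow[->,densely dashed] \mathbb{C}^N \subset \mathbb{P}^N$ and regard it as a rational map $\mathbb{P}^2 \DashedArrow[->,densely dashed] \mathbb{P}^N$ with image $\overline{S}$. By Theorem \ref{BlowUp} there is a smooth surface $Y$, a composite of blowups $g:Y\to\mathbb{P}^2$, and a morphism $h:Y\to\mathbb{P}^N$ with $h = f\circ g$ on the locus where things are defined; note $h(Y)=\overline{S}$ since $f$ is dominant. The key objects are the line at infinity $L_\infty=\mathbb{P}^2\setminus\mathbb{C}^2$ and its strict transform together with the exceptional locus of $g$ lying over points of $L_\infty$; call their union $D\subset Y$, so that $Y\setminus D = g^{-1}(\mathbb{C}^2)$ maps via $h$ into $\overline{S}$. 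I claim $h(D)\supseteq S_\infty$: indeed, any point of $S_\infty=\overline{S}\setminus S$ is in $h(Y)$, and since $h$ restricted to $g^{-1}(\mathbb{C}^2)$ composes with $g$ to give $f$ on $\mathbb{C}^2$, whose image is exactly $S$ (here surjectivity of $f$ is used), a point of $S_\infty$ cannot be hit from $g^{-1}(\mathbb{C}^2)$, so it is hit from $D$.

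Next I want to understand $D$ and $h(D)$ more precisely. The curve $D$ is a connected (it contains the strict transform of $L_\infty$, which meets every exceptional component sitting over $L_\infty$) union of rational curves: the strict transform of $L_\infty\cong\mathbb{P}^1$ and finitely many $\mathbb{P}^1$'s from the blowups. Then $h(D)$ is a connected union of images of rational curves. Now compare dimensions: $h(D)$ is at most $1$-dimensional. On the other hand, $S_\infty = \overline{S}\setminus S$ is a hyperplane section of the surface $\overline{S}$, hence is pure of dimension $1$ (it is a nonempty — because $\overline{S}$ is projective and $S$ affine — effective Cartier divisor, having used that $\overline{S}$ is smooth, or at least that it is a closed subvariety of $\mathbb{P}^N$ not contained in the hyperplane at infinity). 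Combining $S_\infty \subseteq h(D) \subseteq \overline{S}$ with $\dim h(D)\le 1 = \dim S_\infty$ forces $h(D)$ to be exactly a $1$-dimensional set whose $1$-dimensional part is $S_\infty$; in particular every component of $S_\infty$ is the image $h(C)$ of some component $C$ of $D$. Since each such $C$ is rational and $h(C)$ is its image under a morphism, $h(C)$ is a rational curve (the normalization of $h(C)$ is dominated by $\mathbb{P}^1$, hence is $\mathbb{P}^1$). Therefore $S_\infty$ has a rational component, which is the assertion.

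The step I expect to be the main obstacle is making rigorous the claim that $S_\infty\subseteq h(D)$ together with the dimension bookkeeping — specifically, ruling out the possibility that $h$ contracts all of $D$, or enough of $D$, so that the $1$-dimensional $S_\infty$ is not covered. This is where the hypothesis that $\overline{S}$ is smooth (so normal) enters through Corollary \ref{ImDim1} and Theorem \ref{IndetAfin}: if $h$ contracted a connected piece $E$ of $g^{-1}(P)$ to a point for a fundamental point $P\in L_\infty$, then $f$ would extend over $P$ by the argument in Corollary \ref{ImDim1}, contradicting that $P$ is fundamental; and the strict transform of $L_\infty$ cannot be contracted either, since $h$ is birational onto $\overline{S}$ (being $f\circ g$ with $f$ birational) and contracting a non-exceptional curve would violate Theorem \ref{FundLoc} / Remark \ref{ClasifBirat}. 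Once no component of $D$ dominating a component of $S_\infty$ is contracted, the normalization argument finishes it. A secondary technical point is confirming that $h(D)$, a priori only constructible, actually contains all of $S_\infty$ set-theoretically; this follows because $h$ is a proper (projective) morphism, so $h(D)$ is closed, and it contains the dense-in-$S_\infty$ image of the non-fundamental part of $D\cap g^{-1}(L_\infty)$.
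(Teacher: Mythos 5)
Your overall strategy (resolve $\overline{f}$ via Theorem \ref{BlowUp} and exhibit rational curves in $S_\infty$ as images of rational curves on $Y$) is the paper's strategy, but two of your key claims are false, and the second is exactly the case the paper has to work hardest on.

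First, the inclusion $S_\infty\subseteq h(D)$ fails. Your justification is that $h$ on $g^{-1}(\mathbb{C}^2)$ ``composes with $g$ to give $f$, whose image is exactly $S$,'' but this overlooks two sources of points of $S_\infty$ hit from $g^{-1}(\mathbb{C}^2)$: (i) the exceptional fibres over fundamental points of $\overline{f}$ lying \emph{in} $\mathbb{C}^2$, where $h$ is not computed by $f$ at all (the paper's Lemma \ref{FundAInf} shows precisely that their images land in $S_\infty$); and (ii) the affine curve along which the denominators of $f$ vanish, where $\overline{f}$ is defined but takes values in the hyperplane at infinity, hence in $S_\infty$. Because of (ii) your argument would prove that \emph{every} component of $S_\infty$ is rational, which is false: in the paper's first example of Section 4 (the Veronese re-embedding with $H=v_d(C\cup L_\infty)$ and $C$ a smooth plane curve of degree $d-1\ge 3$), the surface admits a surjective birational parametrization while $S_\infty$ contains the non-rational component $v_d(C)$, which meets $h(D)=v_d(L_\infty)$ only in finitely many points. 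So one can only hope to extract \emph{one} rational component from $g^{-1}(L_\infty)$, and showing that even that candidate curve really lies in $S_\infty$ requires the pointwise argument the paper uses (general $Q$ in the candidate curve has $h^{-1}(Q)$ zero-dimensional by Theorem \ref{DimFibra} and connected by Theorem \ref{ZMT}, hence a single point lying over $L_\infty$, so $Q\notin f(\mathbb{C}^2)=S$ by surjectivity); your proposal never makes this step.

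Second, your claim that the strict transform of $L_\infty$ cannot be contracted by $h$ is wrong. A birational morphism of smooth projective surfaces is a composite of blowups (Remark \ref{ClasifBirat}) and contracts every curve exceptional \emph{for $h$}; such a curve need not be exceptional for $g$, and neither Theorem \ref{FundLoc} nor Remark \ref{ClasifBirat} forbids $\overline{f}$ from contracting $L_\infty$. Indeed this happens for the paraboloid $f(s,t)=(s,t,s^2+t^2)$, which is birational and surjective with $\overline{S}$ a smooth quadric: $\overline{f}$ sends all of $L_\infty\setminus\{s^2+t^2=0\}$ to $(0:0:1:0)$. This is case 2 of the paper's Lemma \ref{InfAInf}, and it is where Castelnuovo's criterion enters: if $h$ contracts the strict transform of $L_\infty$, that strict transform must be a $(-1)$-curve, forcing $g$ to blow up at least two points of $L_\infty$, and the images of those exceptional divisors (nontrivial by Corollary \ref{ImDim1}, contained in $S_\infty$ by Lemma \ref{FundAInf}) supply the rational curves. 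Without this case your proof is incomplete even after the first gap is repaired.
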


To prove Theorem \ref{main}, we consider Theorem \ref{BlowUp} and get the commutative diagram

\begin{equation}\label{castillo}
\xy
	(0,12)*++{Y}="Y";
	(-15,0)*+++{\mathbb{P}^2}="P2";
	(-15,-10)*+++{\mathbb{C}^2}="A2";
	(15,0)*+++{\overline{S}}="overS";
	(15,-10)*+++{S}="S";
	(30,0)*++{\mathbb{P}^N}="PN";
	(30,-10)*++{\mathbb{C}^N}="AN";
	{\ar_{\displaystyle g} "Y"; "P2"};
	{\ar^{\displaystyle h} "Y"; "overS"};
	{\ar@{-->}^{\displaystyle \overline{f}} "P2"; "overS"};
	{\ar@{-->}^{\displaystyle f} "A2"; "S"};
	(22,0)*{\hookrightarrow};
	(22,-10)*{\hookrightarrow};
	(-16,-5)*{\rotatebox[origin=c]{90}{$\hookrightarrow$}};
	(15,-5)*{\rotatebox[origin=c]{90}{$\hookrightarrow$}};
	(29,-5)*{\rotatebox[origin=c]{90}{$\hookrightarrow$}};
\endxy
\end{equation}
We also establish some more notation. We denote by $F(\overline{f})$  the (finite, by Theorem \ref{FundLoc}) fundamental locus of $\overline{f}$, and  by $L_\infty=\mathbb{P}^2\setminus\mathbb{C}^2$  the infinity line of the plane.

We prove two lemmas before attacking Theorem \ref{main}.

\begin{lemma}\label{FundAInf}
In the conditions of Theorem \ref{main}, and with the notation above, then it holds that $h(g^{-1}(F(\overline{f}))\subset S_\infty$.
\end{lemma}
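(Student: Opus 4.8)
The goal is to show that every fundamental point of $\overline f$ gets sent by $h$ into the infinity section $S_\infty$. The key structural fact is that $f$ is surjective onto $S$, i.e. every point of the affine part $S$ is hit by $f$ from a genuine point of $\mathbb{C}^2$, not merely from the boundary.

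\begin{pf}
Let $P\in F(\overline{f})$ be a fundamental point of $\overline f$. We first argue that $P$ must lie on the infinity line $L_\infty$. Indeed, if $P\in\mathbb{C}^2$, then in a neighbourhood of $P$ the map $\overline f$ coincides with $f$, which is a morphism defined on all of $\mathbb{C}^2$ (being a rational map $\mathbb{C}^2\DashedArrow[->,densely dashed]\mathbb{C}^N$ that is everywhere regular as a map to affine space — more carefully, apply Theorem \ref{IndetAfin} to $f$ on a normal affine neighbourhood of $P$, its indeterminacy locus there being finite since $F(\overline f)$ is finite, to conclude $f$ is regular at $P$); hence $\overline f$ extends over $P$, contradicting that $P$ is fundamental. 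So $F(\overline{f})\subset L_\infty$.

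Now take $P\in F(\overline{f})\subset L_\infty$, and consider $h(g^{-1}(P))$. By Corollary \ref{ImDim1} (applicable since $\mathbb{P}^2$ is smooth, hence normal), $h(g^{-1}(P))$ is a connected finite union of rational curves; in particular it is $1$-dimensional, and it is contained in $\overline S$. Suppose, for contradiction, that $h(g^{-1}(P))$ is \emph{not} contained in $S_\infty$. Then some point $Q\in h(g^{-1}(P))$ lies in the affine part $S=\overline S\setminus S_\infty$. Since $f$ is surjective onto $S$, there is a point $a\in\mathbb{C}^2$ with $f(a)=Q$; as $f$ and $\overline f$ agree on $\mathbb{C}^2$, $\overline f$ is a morphism near $a$ and $\overline f(a)=Q$. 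Pick $y\in g^{-1}(P)$ with $h(y)=Q$. On the other hand, $g$ is an isomorphism away from the (finitely many) fundamental points of $\overline f$, and in particular over the point $a\in\mathbb{C}^2\setminus L_\infty$ the fibre $g^{-1}(a)$ is a single point $y'$ with $h(y')=\overline f(a)=Q$.

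To derive the contradiction, compare the two preimages of $Q$ under $h$: one lying over $P$ (the curve direction) and one lying over $a$. Consider the birational morphism $h:Y\to\overline S$. Since $\overline S$ is smooth hence normal, Zariski's Main Theorem (Theorem \ref{ZMT}) gives that $h^{-1}(Q)$ is connected. But $h^{-1}(Q)$ contains $y'$ (an isolated point mapping isomorphically under $g$ to $a\in\mathbb{C}^2$) together with a positive-dimensional piece of $g^{-1}(P)\subset g^{-1}(L_\infty)$; since $g^{-1}(\mathbb{C}^2)$ and $g^{-1}(L_\infty)$ are disjoint and the former contains $y'$ while the latter contains the curve part of $h^{-1}(Q)$, the fibre $h^{-1}(Q)$ would be disconnected unless the curve part actually meets $g^{-1}(a)$ — impossible, as $P\ne a$. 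This contradiction shows $h(g^{-1}(P))\subset S_\infty$ for every $P\in F(\overline f)$, i.e. $h(g^{-1}(F(\overline f)))\subset S_\infty$.
\end{pf}

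The step I expect to be the genuine obstacle is making the last connectedness argument fully rigorous: one must be careful that $h^{-1}(Q)$ really does contain a positive-dimensional subset of $g^{-1}(P)$ (not just the single point $y$), and that the isolated point $y'$ over $a$ is genuinely separated from it inside $Y$. The cleanest route is probably to avoid $g^{-1}(a)$ entirely and instead argue directly: if $Q\in S$ were in the image of some fundamental curve $h(g^{-1}(P))$, take a small affine neighbourhood $U\subset\mathbb{C}^N$ of $Q$ with $U\cap\overline S\subset S$; then $V=f^{-1}(U)\cup\{P\}$ is a neighbourhood of $P$ in $\mathbb{P}^2$ on which $\overline f$ maps into the affine space $\mathbb{C}^N$ with finite indeterminacy locus, so by Theorem \ref{IndetAfin} $\overline f$ extends over $P$ — contradicting $P\in F(\overline f)$. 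This mirrors exactly the argument already used in the proof of Corollary \ref{ImDim1}, and I would present it that way rather than via Theorem \ref{ZMT}.
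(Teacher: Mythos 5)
Your overall strategy --- use the surjectivity of $f$ to produce an affine preimage $a$ of $Q$, hence a point of $h^{-1}(Q)$ lying over $a\neq P$, and then invoke Zariski's Main Theorem --- is essentially the paper's, but the execution has a genuine gap: you run the argument for an \emph{arbitrary} point $Q\in h(g^{-1}(P))\cap S$, and for such a $Q$ the fibre $h^{-1}(Q)$ may be positive-dimensional. It could, a priori, be a connected curve $D$ with $g(D)$ a curve through both $P$ and $a$ (i.e.\ $\overline f$ contracts a curve through $P$ and $a$ to $Q$), in which case $h^{-1}(Q)$ is connected and contains both $y$ and $y'$ with no contradiction. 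Your claims that $y'$ is ``an isolated point'' of the fibre and that $h^{-1}(Q)$ contains ``a positive-dimensional piece of $g^{-1}(P)$'' are not justified, and splitting $h^{-1}(Q)$ into its parts over $\mathbb{C}^2$ and over $L_\infty$ is not a disconnection, since the former is open but not closed in the fibre. The missing ingredient is the paper's use of Theorem \ref{DimFibra}: take $Q$ \emph{general} in a curve component of $h(g^{-1}(P))$, so that $h^{-1}(Q)$ is $0$-dimensional (otherwise the preimage of that component would be two-dimensional, contradicting birationality); only then does Theorem \ref{ZMT} force $h^{-1}(Q)$ to be a single point, which must lie in $g^{-1}(P)$ and therefore cannot equal $y'=g^{-1}(a)$. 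Since $S_\infty$ is closed, proving the inclusion for general $Q$ on each component suffices.

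Two further problems. First, your preliminary claim $F(\overline f)\subset L_\infty$ is unjustified (and is not needed in the paper's argument): the indeterminacy locus of $f$ as a map to \emph{affine} space is not $F(\overline f)\cap\mathbb{C}^2$ but its union with $\overline f^{-1}(H_\infty)\cap\mathbb{C}^2$, where $H_\infty$ is the hyperplane at infinity of $\mathbb{P}^N$; this is typically a curve, so Theorem \ref{IndetAfin} does not apply. For instance, for $f(x,y)=(x/y,\,y)$ the origin is a fundamental point of $\overline f$ lying in $\mathbb{C}^2$, so the statement certainly does not follow from general principles about maps to affine space. Second, the ``cleaner route'' you sketch at the end, imitating the proof of Corollary \ref{ImDim1}, also fails: there $V=f^{-1}(U)\cup\{P\}$ is a neighbourhood of $P$ precisely because \emph{all} of $h(g^{-1}(P))$ is the single point $Q\in U$, so that $g^{-1}(P)\subset h^{-1}(U)$; in your situation $h(g^{-1}(P))$ is a curve of which only the one point $Q$ is assumed to lie in $U$, so $g^{-1}(P)\not\subset h^{-1}(U)$ and $V$ need not be open.
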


\begin{proof}
We know by Theorem \ref{FundLoc} that $F(\overline{f})$ is a finite set. Let $P\in F(\overline{f})$ be one of its elements and let $E=g^{-1}(P)$ be the connected component of the exceptional divisor corresponding to $P$. Let $C=h(E)$. We know by Corollary 2.5 that $C$ is a finite union of rational curves. By Theorem \ref{DimFibra}, any general point $Q\in C$ satisfies that $h^{-1}(Q)$ is 0-dimensional (otherwise, $h^{-1}(C)$ would be a surface in $Y$, contradicting the birationality of $f$).

On the other hand, Theorem \ref{ZMT} implies that $h^{-1}(Q)$ is connected. Therefore, $h^{-1}(Q)$ is a single point $P\in Y$. However, since $Q\in C=h(E)$, this means that $P\in E$, so $Q\not\in f(\mathbb{C}^2)$. Since $f$ is surjective onto $S$, this means that $Q\in h(Y)\setminus S=\overline{S}\setminus S=S_\infty$. As this happens for general $Q\in C$, we have that $C\subset S_\infty$.

This is valid for any $P\in F(\overline{f})$, so the proof is completed.
\end{proof}

\begin{lemma}\label{InfAInf}
In the conditions of Theorem \ref{main}, and with the notation above, it holds that $h(g^{-1}(L_\infty))\subset S_\infty$ and
\begin{enumerate}
\item  if $\overline{f}(L_\infty)$ is a curve, then $S_\infty$ contains a curve,
\item  if $\overline{f}$ contracts $L_\infty$, then $S_\infty$ contains at least two rational curves.
\end{enumerate}
\end{lemma}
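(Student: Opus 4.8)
The plan is to first establish the containment $h(g^{-1}(L_\infty))\subset S_\infty$ in a way parallel to Lemma \ref{FundAInf}, and then to split into the two cases according to the behaviour of $\overline{f}$ on $L_\infty$. For the containment: recall that $f(\mathbb{C}^2)=S$ by surjectivity, so $\mathbb{C}^2$ is exactly the locus where $\overline{f}$ is defined and lands in $S$; more precisely, a general point of $L_\infty$ is not a fundamental point (since $F(\overline{f})$ is finite), and its image under $\overline{f}$ cannot lie in $S$, because if $\overline f(p)\in S$ for general $p\in L_\infty$ then $S\subseteq f(\mathbb C^2)$ would force the fibre of $f$ over that image point to meet both $\mathbb C^2$ and $L_\infty$, contradicting birationality of $f$ (equivalently, of $\overline f$) via Theorem \ref{DimFibra} and Theorem \ref{ZMT} as in Lemma \ref{FundAInf}. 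Hence $\overline f(L_\infty)\subset S_\infty$, and pulling back through the blowup, $h(g^{-1}(L_\infty))=\overline{h(\widetilde{L_\infty}\cup\text{exceptional part})}\subset S_\infty$, where $\widetilde{L_\infty}$ denotes the strict transform of $L_\infty$ in $Y$.

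For part (1): if $\overline f(L_\infty)$ is a curve, it is by the previous paragraph a curve contained in $S_\infty$, so $S_\infty$ contains a curve. This case is essentially immediate.

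For part (2): suppose $\overline f$ contracts $L_\infty$ to a point $Q\in\mathbb P^N$. Then $Q\in S_\infty$ by the containment above, and $Q$ is a fundamental point of $\overline f$ — indeed, $\overline f$ cannot be defined at every point of $L_\infty$ and agree with the constant value $Q$, for otherwise Theorem \ref{IndetAfin} (applied on a normal neighbourhood, with $\mathbb P^2$ normal) would let us regard $L_\infty$ as landing in an affine chart around $Q$, and combined with the generic injectivity of $\overline f$ off $L_\infty$ this would violate birationality; so at least one point $P\in L_\infty$ is fundamental. By Corollary \ref{ImDim1}, $h(g^{-1}(P))$ is a connected finite union of rational curves, all contained in $S_\infty$ by Lemma \ref{FundAInf}. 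This gives at least one rational curve in $S_\infty$; to get a \emph{second} one, I would argue that the strict transform $\widetilde{L_\infty}\subset Y$ is itself a rational curve (it is isomorphic to $L_\infty\cong\mathbb P^1$, since blowing up points only modifies $L_\infty$ at finitely many points of its smooth model), and $h(\widetilde{L_\infty})$ is a curve — not a point — because otherwise all of $g^{-1}(L_\infty)$ would be contracted by $h$ to the single point $Q$, making $Q$ a fundamental point whose total transform is contracted, contradicting Corollary \ref{ImDim1} (which forces $h(g^{-1}(P))$ to be positive-dimensional). Thus $h(\widetilde{L_\infty})$ is a rational curve in $S_\infty$ distinct from the components of $h(g^{-1}(P))$ coming from the exceptional divisor over $P$: they are distinct because a component of the exceptional divisor over $P$ meets $\widetilde{L_\infty}$ in at most a point, so their images cannot coincide as curves unless $h$ contracts one of them, which we have just excluded. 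Hence $S_\infty$ contains at least two rational curves.

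The main obstacle I anticipate is the bookkeeping in part (2) needed to guarantee that the two rational curves produced — one from the exceptional fibre over a fundamental point $P\in L_\infty$, one as the image of the strict transform of $L_\infty$ — are genuinely distinct components of $S_\infty$ and not the same curve counted twice; handling this cleanly requires being careful about how $\widetilde{L_\infty}$ sits relative to the exceptional divisor in $Y$ and ruling out the degenerate possibility that $h$ contracts $\widetilde{L_\infty}$ while still not being defined along all of $L_\infty$ downstairs.
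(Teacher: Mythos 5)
Your containment argument and part (1) are essentially the paper's proof: for a general point $Q$ of the image curve, Theorems \ref{DimFibra} and \ref{ZMT} force $h^{-1}(Q)$ to be a single point lying over $L_\infty$, hence $Q\notin f(\mathbb{C}^2)=S$ and so $Q\in S_\infty$. That much is fine.

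Part (2), however, contains a genuine error. You propose to obtain the second rational curve as $h(\widetilde{L_\infty})$ and assert that this image ``is a curve --- not a point''. But in case (2) we are assuming precisely that $\overline{f}$ contracts $L_\infty$, i.e.\ $\overline{f}(L_\infty\setminus F(\overline{f}))$ is a single point $Q$; since $h=\overline{f}\circ g$ off the exceptional locus and $\widetilde{L_\infty}\setminus g^{-1}(F(\overline{f}))$ is dense in $\widetilde{L_\infty}$, continuity gives $h(\widetilde{L_\infty})=\{Q\}$. So your proposed second curve is in fact a point, and the justification you give (``otherwise all of $g^{-1}(L_\infty)$ would be contracted to $Q$'') is a non sequitur: the exceptional components over fundamental points of $L_\infty$ can map to curves while the strict transform is contracted --- that is exactly the geometry of this case. (Your argument that at least one point of $L_\infty$ must be fundamental is also unsound as written: a regular birational morphism can contract a curve and still be generically injective, e.g.\ a blowdown, so ``generic injectivity'' yields no contradiction; the real obstruction is that a curve contracted by a birational morphism of smooth surfaces must have negative self-intersection, whereas $L_\infty^2=+1$.) The paper supplies the missing idea: $h$ is a birational morphism of smooth projective surfaces, hence a composite of point blowups (Remark \ref{ClasifBirat}), so by Theorem \ref{Castelnuovo} the contracted curve $\widetilde{L_\infty}$ must be a $(-1)$-curve at the relevant stage; since $L_\infty^2=1$ in $\mathbb{P}^2$, this forces $g$ to blow up \emph{two} points $P_1,P_2$ of $L_\infty$. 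Each $h(g^{-1}(P_i))$ is then a nonempty finite union of rational curves contained in $S_\infty$ (Lemma \ref{FundAInf} and Corollary \ref{ImDim1}), and a rational curve inside $h(g^{-1}(P_1))$ cannot lie in $h(g^{-1}(P_2))$ because its general point has a singleton $h$-preimage lying in $g^{-1}(P_1)$, which is disjoint from $g^{-1}(P_2)$. Both rational curves arise from exceptional divisors; neither comes from the strict transform of $L_\infty$.
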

\begin{proof}
Let us consider two possibilities:
\begin{enumerate}
\item If $\overline{f}(L_\infty)$ is a curve $C$, by Theorem \ref{DimFibra}, any general point $Q\in C$ satisfies that $h^{-1}(Q)$ is 0-dimensional (otherwise, $h^{-1}(C)$ would be a surface in $Y$, contradicting the birationality of $f$). On the other hand, Theorem \ref{ZMT} implies that $h^{-1}(Q)$ is connected. Therefore, $h^{-1}(Q)$ is a single point $P\in Y$.

However, since $Q\in C=\overline{f}(L_\infty)$, we have that $g(h^{-1}(Q))=g(P)\in L_\infty$, and then $Q\not\in f(\mathbb{C}^2)=S$. This means $Q\in h(Y)\setminus S=\overline{S}\setminus S=S_\infty$. The generality of $Q$ means that $C\subset S_\infty$.
\item If $\overline{f}$ contracts $L_\infty$, then Theorem \ref{Castelnuovo} means that the strict transform of $L_\infty$ by $g$ is a $-1$ curve. This is only possible if $g$ blows up two points $P_1$, $P_2$ in $L_\infty$. By Lemma \ref{FundAInf} and Corollary \ref{ImDim1}, $h(g^{-1}(P_i)),\,i=1,2$, is a nonempty finite union of rational curves and it is contained in $S_\infty$. Let $C$ be a rational curve contained in $h(g^{-1}(P_1))$. For the general $Q\in C$, Theorems \ref{DimFibra} and \ref{ZMT} say that the preimage $h^{-1}(Q)$ is 0-dimensional and connected, so it is a point $P\in g^{-1}(P_1)$. Since $g^{-1}(P_2)\cap g^{-1}(P_1)=\emptyset$, because a point cannot have two images, we have that $C$ is not contained in $h(g^{-1}(P_2))$. This means that $S_\infty$ has at least two components, one in $h(g^{-1}(P_1))$ and the other in $h(g^{-1}(P_2))$.
\end{enumerate}
\end{proof}

\begin{proof} \emph{(of Theorem \ref{main})}
It is a direct consequence of Lemma \ref{InfAInf}.
\end{proof}

\begin{remark}
The smoothness of $\overline{S}$ is used any time we apply Zariski's Main Theorem or Castelnuovo's Criterion of Contractibility.
\end{remark}

We now restate Theroem \ref{main} in a way we can use to prove inexistence results easily:

\begin{theorem}\label{twisted_main}
Let $\overline{S}\subset\mathbb{P}^N$ a projective rational smooth surface. Let $S_\infty$ the intersection of $S$ with the infinity hyperplane of $\mathbb{P}^N$ and let $S$ be $\overline{S}-S_\infty$. Suppose that none of the components of $S_\infty$ is a rational curve. Then there does not exist any rational map $f:\mathbb{C}^2\DashedArrow[->,densely dashed]S$ both birational and surjective.
\end{theorem}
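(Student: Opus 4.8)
The plan is to derive Theorem \ref{twisted_main} as the contrapositive of Theorem \ref{main}. Suppose, for contradiction, that $\overline{S}\subset\mathbb{P}^N$ is a projective rational smooth surface, that $S_\infty$ is the hyperplane section at infinity and $S=\overline{S}-S_\infty$, that no component of $S_\infty$ is rational, and yet there exists a rational map $f:\mathbb{C}^2\DashedArrow[->,densely dashed]S$ that is both birational and surjective. First I would check that the hypotheses of Theorem \ref{main} are met: the affine variety $S$ equals the Zariski closure of $f(\mathbb{C}^2)$ in $\mathbb{C}^N$ (since $f$ is dominant onto $S$ and $S$ is closed in the affine chart), and $\overline{S}$ is the Zariski closure of $S$ in $\mathbb{P}^N$ by construction, with $S_\infty=\overline{S}-S$. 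Since $\overline{S}$ is smooth by hypothesis, Theorem \ref{main} applies directly and yields that $S_\infty$ has at least one rational component.

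This conclusion contradicts the standing assumption that none of the components of $S_\infty$ is a rational curve. Hence no such $f$ can exist, which is exactly the assertion of Theorem \ref{twisted_main}.

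The only genuine point requiring care — and the step I expect to be the main (mild) obstacle — is confirming that the affine/projective setup in Theorem \ref{twisted_main} is literally an instance of the one in Theorem \ref{main}, i.e. that ``the Zariski closure of $f(\mathbb{C}^2)$ in $\mathbb{C}^N$'' coincides with the given $S$. This holds because $f$ is assumed surjective onto $S$, so $f(\mathbb{C}^2)=S$ and in particular $S$ is its own closure; and because $S$ is given as $\overline{S}-S_\infty$ with $\overline{S}$ closed in $\mathbb{P}^N$, its closure in $\mathbb{P}^N$ is $\overline{S}$ and the part at infinity is $S_\infty$. Once this identification is made, the argument is immediate: Theorem \ref{main} is applied verbatim and the negation of its conclusion gives the non-existence statement. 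No new geometry (no further blowups, no Castelnuovo or Zariski Main Theorem arguments) is needed beyond what is already packaged inside Theorem \ref{main}.
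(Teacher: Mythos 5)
Your proposal is correct and matches the paper's own treatment: Theorem \ref{twisted_main} is proved there simply as a restatement (contrapositive) of Theorem \ref{main}. Your extra verification that the affine/projective setup of Theorem \ref{twisted_main} is literally an instance of that of Theorem \ref{main} is sound and only makes explicit what the paper leaves implicit.
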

\begin{proof}
It is just a restatement of Theorem \ref{main}.
\end{proof}

Next result is useful to study cases where Theorem \ref{main} or \ref{twisted_main} are not applicable

\begin{corollary}\label{BlowInf}
In the conditions of Theorem \ref{main}, and with the notation above,
\begin{enumerate}
\item if $F(\overline{f})\cap L_\infty\ne\emptyset$, then $S_\infty$ contains at least two rational components.
\item if $S_\infty$ contains just one rational component, then $\overline{f}$ is a regular morphism.
\end{enumerate}
\end{corollary}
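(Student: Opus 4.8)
The plan is to leverage the dichotomy already established in Lemma \ref{InfAInf} together with the fundamental-locus analysis of Lemma \ref{FundAInf} and Corollary \ref{ImDim1}. For part (1), suppose $F(\overline{f})\cap L_\infty\ne\emptyset$, and pick a point $P$ in this intersection. First I would note that $\overline{f}$ cannot be a morphism near $P$, so in the resolution diagram \eqref{castillo} the exceptional component $E=g^{-1}(P)$ maps under $h$ to a nonempty finite union of rational curves $C_P\subset S_\infty$, by Corollary \ref{ImDim1} and Lemma \ref{FundAInf}. The point is to extract a \emph{second} rational component of $S_\infty$. Here I would split into the two cases of Lemma \ref{InfAInf}. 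If $\overline{f}$ contracts $L_\infty$, then Lemma \ref{InfAInf}(2) already gives two rational curves in $S_\infty$ and we are done. If instead $\overline{f}(L_\infty)$ is a curve, then since $L_\infty\simeq\mathbb{P}^1$ this image curve is rational, giving one rational component of $S_\infty$; and since $P\in L_\infty$, the rational curves in $h(g^{-1}(P))$ may a priori coincide with $\overline{f}(L_\infty)$. The crux is to rule out that $\overline{f}(L_\infty)$ is the \emph{only} rational component: I would argue by tracking generic fibers of $h$ as in the proof of Lemma \ref{InfAInf}. For a general $Q$ on a rational curve $C\subset h(g^{-1}(P))$, the fiber $h^{-1}(Q)$ is a single point lying in $g^{-1}(P)$, hence cannot lie on the strict transform of $L_\infty$ (which meets $g^{-1}(P)$ in at most finitely many points and dominates $\overline{f}(L_\infty)$ only through points of that strict transform); therefore $C\ne\overline{f}(L_\infty)$, yielding the second rational component. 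This separation-of-components argument, mirroring the end of Lemma \ref{InfAInf}(2), is the step I expect to require the most care.

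For part (2), I would argue by contraposition: if $\overline{f}$ is not a regular morphism then $F(\overline{f})\ne\emptyset$, and I must produce at least two rational components of $S_\infty$. If $F(\overline{f})\cap L_\infty\ne\emptyset$ this is exactly part (1). The remaining case is $F(\overline{f})\ne\emptyset$ but $F(\overline{f})\subset\mathbb{C}^2$; then pick $P\in F(\overline{f})$ in the affine part. By Lemma \ref{FundAInf} and Corollary \ref{ImDim1}, $h(g^{-1}(P))$ is a nonempty connected finite union of rational curves contained in $S_\infty$, giving one rational component. To get a second one I would invoke Lemma \ref{InfAInf} applied to $L_\infty$ itself: either $\overline{f}(L_\infty)$ is a (necessarily rational, being the image of $\mathbb{P}^1$) curve in $S_\infty$, or $\overline{f}$ contracts $L_\infty$ and Lemma \ref{InfAInf}(2) gives two rational curves. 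In the first subcase I again need the separation argument — a general point of a rational curve in $h(g^{-1}(P))$ has $h$-fiber a point over $P\in\mathbb{C}^2$, which cannot lie over $L_\infty$, so that rational curve differs from $\overline{f}(L_\infty)$ — producing two distinct rational components, contradicting the hypothesis of a single one. Hence $\overline{f}$ must be a morphism.

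In summary, both parts reduce to two ingredients already available: the image of any exceptional divisor over a fundamental point is a rational curve in $S_\infty$ (Corollary \ref{ImDim1} plus Lemma \ref{FundAInf}), and $L_\infty$ contributes its own rational curve(s) to $S_\infty$ (Lemma \ref{InfAInf}). The only genuinely new work is checking that the rational component coming from a fundamental point and the rational component coming from $L_\infty$ are distinct, which follows from the generic-fiber/connectedness bookkeeping that pervades the proofs of the two lemmas. I would present part (1) first, then deduce part (2) from it by the contrapositive argument above.
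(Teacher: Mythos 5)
Your argument is correct and uses exactly the same ingredients as the paper's proof (Lemmas \ref{FundAInf} and \ref{InfAInf}, Corollary \ref{ImDim1}, and the ``generic fibre of $h$ is a single point'' bookkeeping via Theorems \ref{DimFibra} and \ref{ZMT}), but you decompose the corollary in the opposite logical order. The paper first observes that (1) is literally a special case of (2): if $F(\overline{f})\cap L_\infty\ne\emptyset$ then $\overline{f}$ is not regular, so by (2) together with Theorem \ref{main} the curve $S_\infty$ must have at least two rational components. It then proves (2) directly: if $C$ is the unique rational component, Lemma \ref{InfAInf} forces $\overline{f}(L_\infty)$ to be dense in $C$, so for general $Q\in C$ the fibre $h^{-1}(Q)$ is a single point lying over a general (hence non-fundamental) point of $L_\infty$; since $h(g^{-1}(F(\overline{f})))$ would have to be a union of rational curves inside $C$ yet miss its general point, it is empty and $F(\overline{f})=\emptyset$. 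Your route --- proving (1) first by separating the exceptional image over $P$ from $\overline{f}(L_\infty)$ (its generic $h$-fibre sits in $g^{-1}(P)$, which meets the strict transform of $L_\infty$ in only finitely many points), then deducing (2) by contraposition with an additional case for fundamental points lying in $\mathbb{C}^2$ --- is sound, and your separation step is essentially the same comparison of generic fibres the paper makes. What the paper's order buys is economy: the separation argument is run once rather than twice, and (1) becomes a one-line consequence of (2). Both versions are complete proofs.
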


\begin{proof} First of all, we observe that the first statement is a particular case of the second one: if $F(\overline{f})\cap L_\infty\ne\emptyset$, then $f$ is not a regular morphism. So, by the second statement, we have that $S_\infty$ cannot contain just one rational component (and it contains at least one by Theorem \ref{main}), so it contains at least two of them. To prove the second statement, let $C$ be that only rational component in $S_\infty$. Lemma \ref{InfAInf} says that $\overline{f}$ does not contract $L_\infty$, and that $\overline{f}(L_\infty)$ is an open subset of $C$. By Theorems \ref{DimFibra} and \ref{ZMT}, for a general $Q\in C=\overline{f}(L_\infty)$, $h^{-1}(Q)$ is 0-dimensional and connected, so it is a point $P$. Due to (\ref{castillo}) being commutative, we have $g(P)\in L_{\infty}$.

On the other hand, Lemma \ref{FundAInf}, says that $h(g^{-1}(F(\overline{f}))\subset S_\infty$ and Corollary \ref{ImDim1} says that $h(g^{-1}(F(\overline{f}))$ is a (possibly empty) finite union of rational curves. This means that $h(g^{-1}(F(\overline{f}))\subset C$. However, the last paragraph implies that the general $P\in C$ is not in $h(g^{-1}(F(\overline{f}))$. Since $C$ is irreducible and $h(g^{-1}(F(\overline{f}))$ is either empty or 1-dimensional, we have that $F(\overline{f})=\emptyset$, so $\overline{f}$ is regular.
\end{proof}

\section{Examples and inexistence results}

The next example illustrates that the conditions given by Theorem \ref{main} are sharp. We find a family of examples where just one rational component at the infinity is enough to have a surjective parametrizations from $\mathbb{C}^2$.

\begin{example}
Consider the $d$-th Veronese embedding $v_d:\mathbb{P}^2\to\mathbb{P}^N$, where $N=\frac{(d+2)(d+1)}{2}$, given by all degree $d$ monomials: $v_d(x_0:x_1:x_2)=(x_0^d:x_0^{d-1}x_1:\ldots:x_2^d)$. Let $C\subset\mathbb{P}^2$ be a curve of degree $d-1$ and $L_\infty$ be the infinity line $x_0=0$. Then the ideal of $C\cup L_\infty$ is given by a homogeneous polynomial of degree $d$. This means that $v_d(C\cup L_\infty)$ is the intersection of $\widetilde{S}=v_d(\mathbb{P}^2)$ with a hyperplane $H$. Therefore, if we compose $v_d$ with the suitable automorphism of $\mathbb{P}^N$ that takes $H$ to the infinity, we get a parametrization $f$ of the image $\overline{S}$ such that $S_\infty=f(C\cup L_\infty)$. This means that $S=\overline{S}\setminus S_\infty$ is covered by $\mathbb{P}\setminus(C\cup L_{\infty})=\mathbb{C}^2\setminus C$, and $f_{\mathbb{C}^2\setminus C}$ is an isomorphism.
\end{example}

The next example is one of the main motivations of this paper: finding examples of rational affine surfaces that do not admit birational surjective parametrizations.

\begin{example}\label{ex:FermatCubic}
Consider $S$ to be the Fermat Cubic surface, given by the equation $x^3+y^3+z^3=1$. The intersection with the infinity plane is given by the equation $x^3+y^3+z^3=0$ in projective coordinates. It is a smooth cubic curve, so it is not rational. Therefore, by Theorem \ref{twisted_main}, although $S$ is rational and $\overline{S}$ is smooth, it is impossible to parametrize $S$ surjective and birationally from any open subset from $\mathbb{C}^2$.
\end{example}

Finally, Corollary \ref{BlowInf} provides some ideas to prove inexistence results for other surfaces.

\begin{example}
Consider $S$ to be a smooth quadric hypersurface. There are only two possibilities for $S_\infty$. If $S_\infty$ has singularities, then it must consist in two lines. Then the structure of $\overline{S}$ as $\mathbb{P}^1\times\mathbb{P}^1$ gives the affine part the structure of $\mathbb{C}^1\times\mathbb{C}^1\simeq\mathbb{C}^2$. We would have the well-known parametrization of the paraboloid.

However, if the infinity curve is nonsingular, then it is a conic, which is rational. If there were a surjective birational map $f:\mathbb{C}^2\DashedArrow[->,densely dashed] S$, Corollary \ref{BlowInf} implies that $f$ could be regularly extended to a morphism $\overline{f}:\mathbb{P}^2\to\overline{S}$. Since $f$ is surjective, $\overline{f}$ would be surjective too. However, one can choose two lines in $\overline{S}\simeq\mathbb{P}^1\times\mathbb{P}^1$ whose intersection is empty. Their preimages in $\mathbb{P}^2$ would be two curves with no common points, which is impossible for an algebraically closed field. This implies that it is impossible to find a surjective birational parametrization of the hyperboloid.
\end{example}

\noindent \textbf{Acknowledgments.}
Authors partially supported by
Ministerio de Econom\'{\i}a y Competitividad and the European Regional Development
Fund (ERDF), under the Project MTM2014-54141-P; and by Junta de Extremadura and FEDER funds (group FQM024).


\bigskip

\noindent Jorge Caravantes, Research Group GVP \\ Dpto. de \'Algebra, Universidad Complutense de Madrid \\ Plaza de Ciencias 3, 28040 Madrid, Spain \\ Email: \texttt{jcaravan@mat.ucm.es}

\medskip

\noindent J. Rafael Sendra, Research Group ASYNACS \\ Dpto. de F\'isica y Matem\'aticas, Universidad de Alcal\'a \\ 28871 Alcal\'a de Henares (Madrid), Spain \\
E-mail: \texttt{Rafael.sendra@uah.es}

\medskip

\noindent David Sevilla, Research group GADAC \\ Centro U. de M\'erida, Universidad de Extremadura \\ Av. Santa Teresa de Jornet 38, 06800 M\'erida (Badajoz), Spain \\
E-mail: \texttt{sevillad@unex.es}

\medskip

\noindent Carlos Villarino, Research Group ASYNACS \\ Dpto. de F\'isica y Matem\'aticas, Universidad de Alcal\'a \\ 28871 Alcal\'a de Henares (Madrid), Spain \\
E-mail: \texttt{Carlos.villarino@uah.es}

\end{document}